\documentclass[letterpaper, 12pt]{amsart}

\usepackage{amsmath, amsfonts, amssymb, amsthm, amsrefs, array,
 stmaryrd, graphicx, hyperref, mathrsfs, eucal, caption, soul, url}

\usepackage{tikz}
\usetikzlibrary{decorations.pathmorphing}

\usepackage[hhmmss]{datetime}

\allowdisplaybreaks
\AtBeginDocument{%
	\def\MR#1{}
}

\usepackage{color}

\theoremstyle{plain}
\newtheorem{thm}{Theorem}[section]
\newtheorem{lemma}{Lemma}[section]
\newtheorem{prop}[lemma]{Proposition}

\theoremstyle{definition}
\newtheorem{defn}[lemma]{Definition}
\theoremstyle{remark}

\numberwithin{equation}{section}

\begin{document}

\title{Steady gradient Ricci Yang-Mills solitons on 2-orbifolds}

\author{Peng Lu}
\address{Department of Mathematics, University of Oregon, Eugene, OR 97403, USA}
\email{penglu@uoregon.edu}

\author{Jiuru Zhou}
\address{School of Mathematics, Yangzhou University, Yangzhou, 
Jiangsu 225002, China}
\email{zhoujiuru@yzu.edu.cn}
 
 \date{September 28, 2026}
 
\keywords{Ricci Yang-Mills solitons, 2d bad orbifolds, quotient of Hamilton
cigar soliton. }

\subjclass[2010]{53C44 (primary), 35K59 (secondary)}

\begin{abstract}
Following the recent work of M. Womack \cite{Wo26}
we consider the steady Ricci Yang-Mills solitons on 2d surfaces containing
 orbifold  point $\mathbb{R}^2/\mathbb{Z}_p$.
We show that there are a family of solitons depending parameter $\lambda$
which approaches to $\mathbb{Z}_p$-quotient of Hamilton cigar soliton
as  $\lambda  \to  (-2/p)^+$ and  approaches to (after rescaling) 
$\mathbb{Z}_p$-quotient of round sphere as $\lambda \to \infty$.
For any integer $q >p$ there is a $\lambda$ whose corresponding soliton 
metric is defined on football orbifold $S_{p,q}^2$.
\end{abstract}

\maketitle


\section{Introdution} \label{sec introd}

Ricci Yang-Mills flow was introduced by J. Streets in \cite{St07}.
The corresponding solitons are defined as the following.
Let $(M^n, g)$ be a Riemannian manifold.
Let $L \to M$ be a $U(1)$-line bundle with connection $A$ and curvature 2-form $F$.

 \begin{defn} 
 Let $f$ be a smooth function on $M$.
 A gradient Ricci Yang-Mills soliton is a tuple $(M^n,g,F, f)$ which satisfies
 \begin{equation} \label{eq steady grad Ricci ym soliton}
 \operatorname{Rc}(g) - \frac{1}{2} F^2 +\nabla^2 f = \hat{\lambda} g \quad
\text{~ and ~} \quad \operatorname{div}(e^{-f} F) =0,
 \end{equation}
 where $(F^2)_{ij} =g^{kl}F_{ik}F_{jl}$. When constant $\hat{\lambda} =0$, the soliton
 is called steady gradient Ricci Yang-Mills soliton. When $f$ is a constant, the soliton
 is called trivial. We may assume that $f(0) =0$ by a translation.
 \end{defn}

In \cite{Wo26} Womack considers nontrivial 2d steady gradient Ricci Yang-Mills 
solitons and  finds a family of solutions which interpolate 
between Hamilton's cigar soliton and the round sphere. 
The solutions are proved to be rotationally symmetric.

Recall that the only 2d bad orbifolds are teardrop $S_p^2$ with  integer $p>1$ 
and  the football $S_{p,q}^2$  where $p$ and $q$  are two different positive 
integer \cite{Wu91}. 
Fix an integer $p>1$,
in this paper we consider 2d complete steady gradient Ricci Yang-Mills solitons 
which are defined on a surface $\Sigma^2$ which contains an orbifold point
of type  $\mathbb{R}^2 / \mathbb{Z}_p$. 
Note that the definition of the solitons can be easily extended to orbifolds. 
We will show that nontrivial such solitons must be 
rotationally symmetric and that there is a family of complete solitons $(g_{\lambda},
F_{\lambda}, f_{\lambda})$ such that for each integer $q>p$ there is a $\lambda$
such that $g_{\lambda}$ is an orbifold metric on $S_{p,q}^2$.
 More precisely we will prove

\begin{thm} \label{thm main of orbifold womack p version} 
For any fixed integer $p > 1$,
there is a 1-parameter family of complete rotationally symmetric steady gradient
Ricci Yang-Mills solitons on 2d surface $(\Sigma^2_{\lambda}, g_{\lambda},
F_{\lambda}, f_{\lambda}), \, \lambda \in [-2/p, \infty)$,
which have the following property.
The family $(\Sigma^2_{\lambda}, g_{\lambda})$ converges

\begin{enumerate}
 \item in the pointed Cheeger-Gromov sense to $2p$ times 
 the quotient of  Hamilton cigar soliton  metric on $\mathbb{R}^2/
\mathbb{Z}_p$ as $\lambda  \to  (-2/p)^+$;

 \item in the pointed Cheeger-Gromov sense to a cusp soliton as 
 $\lambda \to 0^-$;

 \item in the pointed Gromov-Hausdorff sense to a cusp soliton as 
 $\lambda  \to 0^+$;
 
 \item in the Gromov-Hausdorff sense, after a rescaling, to $S_{p,p}^2$ with the 
quotient round metric as $\lambda  \to \infty$;

 \item for each  integer $q >p$ the metric $g_{\lambda}$ is defined on orbifold 
 $S^2_{p,q}$ for $\lambda = \frac{2}{q-p}$.
\end{enumerate}
\end{thm}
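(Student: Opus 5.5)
The plan is to reduce the soliton system to an ODE that can be integrated explicitly, read off the metrics $g_\lambda$ from the resulting closed formula, and then check the five limiting statements one by one.

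\emph{Step 1: reduction to a scalar equation.} In dimension two the curvature $2$-form can be written $F=\phi\, dA_g$ for a function $\phi$. Then $(F^2)_{ij}=g^{kl}F_{ik}F_{jl}=\phi^2 g_{ij}$. The Yang--Mills equation $\operatorname{div}(e^{-f}F)=0$ is equivalent to $d(e^{-f}\phi)=0$, so $\phi=c\,e^{f}$ for a constant $c$. The soliton equation becomes
\[
\nabla^2 f=\Big(\tfrac{c^2}{2}e^{2f}-K\Big)g ,
\]
so $\nabla f$ is a conformal Killing field and $J\nabla f$ is a Killing field. When $f$ is nonconstant this yields rotational symmetry about the orbifold point, exactly as in \cite{Wo26}. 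The argument is local away from the cone point and passes to the $\mathbb{Z}_p$-cover near it.

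\emph{Step 2: explicit integration.} Write
\[
g=dr^2+h(r)^2d\theta^2,\qquad \theta\in[0,2\pi),
\]
with $h(0)=0$ and $h'(0)=1/p$, which encodes the cone point $\mathbb{R}^2/\mathbb{Z}_p$.
\begin{itemize}
\item The $d\theta^2$ and $dr^2$ components of the equation give $f''=h'f'/h$, so $f'=ah$ for a constant $a$. By scaling and the sign choice making $f$ decreasing, we may take $a=-1$.
\item The remaining equation $-h''/h-\frac{c^2}{2}e^{2f}+f''=0$ becomes linear once $f$ is used as the independent variable, since $df=-h\,dr$:
\[
\frac{d}{df}\big(e^{-f}h'\big)=-\mu e^{f},\qquad \mu:=\tfrac{c^2}{2}\cdot(\text{normalization}).
\]
Hence $h'=x\big(\frac1p+\mu(x-1)\big)$ with $x=e^{f}\in(0,1]$.
\item Integrating $h\,dh=-h'\,df$ then gives
\[
h^2=(1-x)\big(\tfrac2p-\mu+\mu x\big),
\]
and $dr=-dx/(xh)$.
\end{itemize}
The parameter $\lambda$ of the theorem should be an affine reparametrization of $\mu$, possibly with a different sign convention on $c^2$. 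The ranges will be matched to the theorem's: $\lambda=-2/p$ corresponds to the cigar endpoint and $\lambda=0$ to the cusp transition.

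\emph{Step 3: completeness and the cases.} The second root of $h^2$ is $x_0=1-\frac{2}{p\mu}$.
\begin{itemize}
\item If $x_0\notin(0,1)$, then $h>0$ on all of $(0,1)$ and $x\to0$ is an infinite end; one checks that $\int dx/(xh)$ diverges, so the metric is complete.
\item If $x_0\in(0,1)$, the surface closes up at $x_0$. There $h'(x_0)=-x_0/p$, which is a cone point of angle $2\pi x_0/p$. It is a $\mathbb{Z}_q$ orbifold point exactly when $x_0=p/q$, that is, $\mu=\frac{2q}{p(q-p)}$. This is the value to be identified with $\lambda=2/(q-p)$ after the reparametrization, and it proves item (5).
\end{itemize}

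\emph{Step 4: the limits.}
\begin{itemize}
\item Item (1): as $c\to0$ we get $h^2\to\frac2p(1-x)$ and $f\to$ the cigar potential, so the metric is $2p$ times the $\mathbb{Z}_p$-quotient of the cigar. Smooth convergence of the explicit formula gives pointed Cheeger--Gromov convergence.
\item Items (2) and (3): at the transition parameter the end $x\to0$ changes from a cylindrical end to one with $h\to0$, which is the cusp soliton. From one side the convergence is smooth on compact sets, hence Cheeger--Gromov. From the compact side the cone point at $x_0\to0$ escapes to infinity while the injectivity radius degenerates, so only pointed Gromov--Hausdorff convergence holds.
\item Item (4): as $\mu\to\infty$ we have $x_0\to1$, and the interval $[x_0,1]$ shrinks. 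Rescaling $x=1-\epsilon t$ with $\epsilon=2/(p\mu)$ gives $h^2\approx \mu\epsilon^2 t(1-t)$. After normalizing the area this is the round metric with both cone angles $2\pi/p$, i.e.\ $S^2_{p,p}$. The convergence is only Gromov--Hausdorff because the $\mathbb{Z}_q$ point becomes a $\mathbb{Z}_p$ point only in the limit.
\end{itemize}

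The main obstacle I expect is Step 1 on a general orbifold surface, namely showing that the Killing field $J\nabla f$ has the orbifold point as a zero and generates a global circle action, together with the bookkeeping that matches $\mu$ to the paper's $\lambda$ and verifies completeness at the ends.
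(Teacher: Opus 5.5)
Your proposal is correct, but the integration step follows a different route from the paper.

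The paper keeps $r$ as the independent variable and substitutes $\varphi=u'/u$ (so in fact $u=e^{-f}$). It then solves the linear equation $u'''+\lambda u'=0$ separately for $\lambda>0$, $-2/p<\lambda<0$, $\lambda=0$ and $\lambda=-2/p$. Items (1)--(3) and (5) are read off the resulting closed-form $r$-profiles, for instance $\varphi_\lambda'(\pi/\sqrt{\lambda})=-\lambda/(p\lambda+2)$. Item (4) comes from rescaling by $\alpha=\lambda$ and passing to the limit equation $\tilde\varphi''+\tilde\varphi=0$.

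You instead use $x=e^{f}=1/u$ as the coordinate and get the single first integral $h^2=(1-x)(\frac{2}{p}-\mu+\mu x)$, valid for every parameter. Several things then come out without case analysis:
\begin{itemize}
\item the compact/noncompact dichotomy and completeness;
\item the transition at $\lambda=0$, which is the second root $x_0=1-\frac{2}{p\mu}$ crossing $0$;
\item the far cone slope $h'(x_0)=-x_0/p$.
\end{itemize}
Your blow-up $x=1-\epsilon t$ is an explicit version of the paper's limiting ODE in item (4). What the paper's route buys is explicit $r$-profiles, which make the pointed convergence claims immediate. In your coordinates you should still note smooth dependence on the parameter on compact $r$-sets. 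This follows, for example, from continuous dependence for $h''+3hh'+h^3+\lambda h=0$, which your $h$ satisfies.

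Some bookkeeping to fix:
\begin{itemize}
\item The displayed equation should read $\frac{d}{df}(e^{-f}h')=+\frac{c^2}{2}e^{f}$. With $\mu=c^2/2$, this is what gives your (correct) $h'=x(\frac1p+\mu(x-1))$.
\item The reparametrization is exactly $\lambda=\mu-\frac2p$, since the paper has $\lambda=\frac{\eta^2}{2}-\frac2p$ with $\eta=c$. It sends $\mu=0,\frac2p,\frac{2q}{p(q-p)}$ to $\lambda=-\frac2p,0,\frac{2}{q-p}$. State this rather than leaving the sign convention open.
\item Taking $a=-1$ is a choice of branch, not a normalization: $a>0$ gives the paper's separate $\mu=1$ family (the $S^2_{p,q}$ with $q<p$). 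The choice is harmless here because the theorem is existential.
\item For the same reason, Step 1 is not needed; the paper simply posits the rotationally symmetric ansatz, and the Killing-field argument matters only for the classification theorem.
\item Your explanation of ``only Gromov--Hausdorff'' is off. Near the base point, the convergence as $\lambda\to0^+$ is smooth. The real issue is that for generic $\lambda>0$ the far point is a cone point of angle $2\pi x_0/p$ that is not an orbifold point. For $\lambda>2$ it never is one, so in item (4) there is no ``$\mathbb{Z}_q$ point''. Since only Gromov--Hausdorff convergence is claimed, this does not affect correctness.
\end{itemize}
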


Our interest of considering 2d steady gradient Ricci Yang-Mills solitons on orbifolds
comes from the study of Ricci flow on 2-orbifolds done by  L.F. Wu in \cite{Wu91}.

The paper is organized as follows. In Section~\ref{sec rot symm discussion}, 
we prove that a nontrivial gradient Ricci Yang--Mills soliton on a surface with 
an orbifold point is necessarily rotationally symmetric. 
In Section~\ref{sec reduction and solutions}
assuming rotational symmetry we reduce the steady soliton equations to an ODE 
system with parameter $\mu$
 and, after a suitable rescaling, we solve the equations in three cases 
$\mu=-1,0,1$, and obtain the corresponding families of soliton metrics.
 Finally, in Section~\ref{subsec pf main thm}, we analyze the limiting behavior 
 of these metrics and prove 
 Theorem~\ref{thm main of orbifold womack p version}, along with
 Theorem~\ref{thm uniqueness main of Ricci YM}  about
  the classification of nontrivial complete steady Ricci Yang--Mills solitons 
on orbifolds with at least one nontrivial  orbifold point.

\noindent
\subsection*{\rm{Acknowledgment}}
P.L. thanks the School of Mathematics, Yangzhou University for the
hospitality during his visit in the summer of 2026.
 J.R.Z. thanks Prof. Xingwang Xu and Hongyu Wang for their continual  support.

\section{Rotational symmetry of gradient Ricci Yang-Mills solitons 
on 2d orbifolds} \label{sec rot symm discussion}

We have the following

\begin{prop}\label{prop killing vf}
	Let $(\Sigma^2, g, F, f)$ be a complete 
	nontrivial gradient Ricci Yang--Mills soliton on
	a 2d surfaces which contains an orbifold point of type 
	$\mathbb{R}^2 / \mathbb{Z}_p$ for some integer $p>1$, and let $J$ 
	be the standard (orbifold) complex structure defined by $g$. 
	Then $J(\nabla f)$ is a nontrivial 
	Killing field on $\Sigma$. Moreover, $J(\nabla f)$ vanishes at the singular
point $\{0 \} /\mathbb{Z}_p$ and
	$(\Sigma, g, F, f)$ is rotationally symmetric.
\end{prop}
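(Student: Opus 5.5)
In dimension two the curvature form can be written $F=\phi\,dA_g$ for a function $\phi$. Then $F^2=\phi^2 g$ in the sign convention of the definition (one should check the sign), and the soliton equation becomes
\[
\Big(\tfrac{R}{2}-\tfrac{\phi^2}{2}-\hat\lambda\Big)g+\nabla^2 f=0 .
\]
The plan is to show that $\nabla^2 f$ is pure trace and then use the standard two-dimensional argument.

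\textbf{Step 1: $J\nabla f$ is Killing.} Since $\nabla^2 f=\frac12\Delta f\, g$, the field $\nabla f$ is conformal. Because $J$ is parallel, we have $\nabla_i (J\nabla f)_j = J_j{}^k\nabla_i\nabla_k f$. Its symmetric part is proportional to $J_{ij}+J_{ji}=0$, so $J\nabla f$ is Killing. The Yang--Mills equation $\operatorname{div}(e^{-f}F)=0$ reads $d(e^{-f}\phi)=0$ after Hodge duality, so $\phi=c\,e^{f}$. Hence everything is expressed through $f$. Nontriviality means $f$ is nonconstant, so $\nabla f\not\equiv 0$, and $J\nabla f$ is a nontrivial Killing field. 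On orbifold charts all of this is computed $\mathbb{Z}_p$-equivariantly on the uniformizing $\mathbb{R}^2$, so it extends to the orbifold.

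\textbf{Step 2: vanishing at the cone point.} Lift $f$ to a $\mathbb{Z}_p$-invariant smooth function $\tilde f$ on a disk $\tilde U\subset\mathbb{R}^2$. Its gradient $\nabla\tilde f(0)$ is a vector fixed by the rotation by $2\pi/p$ with $p>1$. The only such vector is $0$, so $\nabla f$ and hence $J\nabla f$ vanish at $\{0\}/\mathbb{Z}_p$.

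\textbf{Step 3: rotational symmetry.} This is the step I expect to be the main obstacle. A nontrivial Killing field $X=J\nabla f$ vanishing at a point $o$ has $\nabla X(o)$ skew-symmetric, and $\nabla X(o)\neq 0$, since a Killing field with $X(o)=0$ and $\nabla X(o)=0$ vanishes identically. So $\nabla X(o)$ is, up to scale, an infinitesimal rotation in $T_o$ lifted to $\tilde U$. By completeness the flow of $X$ is defined for all time and consists of isometries fixing $o$. Via the exponential map at $o$, computed on the lift and equivariant under $\mathbb{Z}_p$, these isometries act as rotations. The flow is periodic after rescaling $X$, so it generates an isometric $S^1$-action. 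It preserves $f$, since $X f=\langle J\nabla f,\nabla f\rangle=0$, and it preserves $\phi=c e^{f}$, hence $F$.

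To conclude that the whole surface is rotationally symmetric, I will use geodesic polar coordinates around $o$, which reach all of $\Sigma$ by completeness and Hopf--Rinow for orbifolds. The metric then takes the form $dr^2+h(r,\theta)^2d\theta^2$ with $\theta\in[0,2\pi/p)$. The $S^1$-action rotates $\theta$, so $h=h(r)$ and $f=f(r)$, which gives rotational symmetry away from the other cut locus or singular points. The care needed is in handling possible further orbifold points, namely the antipodal cone point in the football case. These are fixed points of the action and are therefore consistent with the symmetry, which I would argue by continuity.
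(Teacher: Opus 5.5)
Your proposal is correct and follows the same route as the paper: $J\nabla f$ is Killing, it vanishes at the cone point because rotation by $2\pi/p$ on $T_0\tilde V$ fixes no nonzero vector, and this Killing field then generates the rotational symmetry. You are more self-contained than the paper, which cites Womack's Proposition~2.2 for the Killing property (your pure-trace Hessian computation) and only asserts the last step, which you justify via $\nabla X(o)\neq 0$, periodicity of the flow and polar coordinates; also, any further cone points are automatically fixed by the $S^1$-action because the singular set is discrete, so you do not need a continuity argument there.
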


\begin{proof} That $J(\nabla f)$ is a nontrivial  Killing field follows from the proof
of \cite{Wo26}*{Proposition 2.2} directly. Because of the existence of nontrivial
orbifold point $\{0 \} /\mathbb{Z}_p$, the statement that $f$ must have a 
critical point can be proved much simpler than that in  \cite{Wo26}*{\S 5}.

To see the statement, at some neighborhood of point  $\{0 \} /\mathbb{Z}_p$
 the lift $\nabla \tilde{f}$ of $\nabla f$ to the uniformizing chart $\tilde V$ is a 
 $\mathbb{Z}_p$-invariant vector field. 
 The differential of the $\mathbb{Z}_p$-action on the tangent space
	$T_0\tilde V\cong \mathbb{C}$ is rotation by $e^{2\pi i/p}$, which fixes no nonzero
	vector, so $\nabla\tilde f(0)=0$.  Hence $J(\nabla f)(\{0 \} /\mathbb{Z}_p)=0$. 
	It follows that the Killing vector field  $J(\nabla f)$ generates
the required rotational symmetry for the gradient Ricci Yang-Mills solitons. 
\end{proof}

\section{Reduction and solitons metrics} \label{sec reduction and solutions}

\subsection{Reduction of equation (\ref{eq steady grad Ricci ym soliton})
when \texorpdfstring{$\hat{\lambda} =0$}{lambda-hat = 0}}
Consider steady gradient rotationally symmetric
 Ricci Yang-Mills solitons  on $(0,b) \times S^1$,
we write the metric as
\begin{equation}\label{eq metric rot symmetric RYM general}
g =dr^2 + \varphi^2(r) d \theta^2, \qquad \theta \in [0, 2\pi]/ \sim,
\end{equation}
$\varphi(r) >0$ on $(0,b)$. From \cite{Wo26}*{Lemma 3.1, Corollary 3.3} 
we have the following 
reduction of equations (\ref{eq steady grad Ricci ym soliton})
when $\hat{\lambda} =0$
\begin{align}
& \varphi^{\prime \prime}  -\frac{3 \mu}{2 } ( \varphi^2)^{\prime}
+ \mu^2 \varphi^3 + \lambda \varphi =0,  \label{eq varphi satisfy 2nd order}  \\
& f^{\prime} =\mu \varphi, \label{eq f reduction}   \\
& f^{\prime \prime} - \frac{1}{2} (f^{\prime})^2 + \frac{ \eta^2}{4} e^{2f}  = \frac{1}{2} \lambda,  \label{eq f 2nd order ode A} \\
& \psi = \eta \varphi e^f, \label{eq psi reduction} \\
& \lambda = 2 \left ( \frac{ \eta^2}{4} + \frac{\mu}{p} \right ),  \label{eq lambda any mu}
\end{align}
where $\lambda \in [ -2/p, \infty)$ is the parameter of steady  
Ricci Yang-Mills solitons. Constant $\mu$ can be fixed by rescaling 
in \S \ref{subsec rescling alpha}.

Fix  some integer $p>1$.
For the solution to extend to $r=0$ with orbifold type $\mathbb{R}^2 / \mathbb{Z}_p$
we require the initial condition  
\begin{equation} \label{eq varphi initial any mu}
\varphi(0) =0, \quad \varphi^{\prime}(0) = \frac{1}{p}.
\end{equation}

\subsection{Rescaling steady Ricci Yang-Mills soliton  to have \texorpdfstring{$\mu =-1, \, 0, \, 1$}{mu = -1, 0, 1}}
\label{subsec rescling alpha}
We may rescale steady soliton $(\Sigma, g, F, f)$ to another soliton
 $(\Sigma, \tilde{ g}, \tilde{F},
\tilde{f} )$ by some constant $\alpha >0$. From \cite{Wo26}*{Lemma 3.6} we have
\begin{align*}
& \tilde{\mu} = \alpha^{-1} \mu,  \quad \tilde{r} = \sqrt{\alpha} r, \\
& \tilde{\varphi}(\tilde{r}) = \sqrt{\alpha} \varphi(r),  \quad 
\tilde{f} (\tilde{r} ) = f(r), \quad \tilde{\psi} (\tilde{r} ) = \psi(r), \\
& \tilde{\eta } = \frac{1}{ \sqrt{\alpha}} \eta, \quad  
\tilde{\lambda} =\frac{\lambda}{\alpha}, 
\quad d \mu_{\tilde{g}} = \alpha d \mu_g. 
\end{align*}
Then equation (\ref{eq varphi satisfy 2nd order}) becomes 

\begin{equation}\label{eq varphi satisfy 2nd order rescaled}
\frac{d^2 \tilde{\varphi}}{d \tilde{r}^2}  - \frac{3 \tilde{\mu}}{ 2 } 
 \frac{d \tilde{\varphi}^2}{d \tilde{r} }  +  \tilde{\mu}^2 \tilde{\varphi}^3 
 +   \tilde{\lambda}\tilde{\varphi} =0.
\end{equation}
If we choose $\alpha =| \mu|$ when $\mu \neq 0$, then $\mu =-1, 1, 0$ 
after re-scaling.

Next we solve the soliton equation  (\ref{eq varphi satisfy 2nd order})
for each  $\mu =-1, 1, 0$ cases.

\subsection{\texorpdfstring{$\mu = - 1$}{mu = -1} case}\label{subsec mu -1}
In this case  we get from  (\ref{eq varphi satisfy 2nd order})
 (compare to \cite{Wo26}*{(3.3)})
\begin{equation} \label{eq varphi satisfy 2nd order mu -1}
\varphi^{\prime \prime} + \frac{3}{2} ( \varphi^2)^{\prime}
+ \varphi^3 + \lambda \varphi = 0,
\end{equation}
with  initial condition  (\ref{eq varphi initial any mu}).
For fixed integer $p>1$, this is a second order equation with one free parameter 
$\lambda = 2 \left ( \frac{ \eta^2}{4} - \frac{1}{p} \right )$.

\vskip .2cm
In case where $\lambda >0$, $\mu =-1$ and $\lambda = 2 \left (
\frac{\eta^2}{4} -\frac{1}{p} \right )$ we use the following 
method to solve\footnote{We use AI system DeepSeek to find the 
solutions} the ODE (\ref{eq varphi satisfy 2nd order mu -1}) 
with initial condition (\ref{eq varphi initial any mu}).
Let $\varphi = \frac{u^{\prime}}{u}$, then $u$ satisfies $u^{\prime \prime \prime}
+ \lambda u^{\prime}  = 0$  and we may choose the corresponding initial condition
$u(0) =1, \, u^{\prime}(0) =0, \, u^{\prime \prime}(0) = \frac{1}{p}$.
Hence we get solution
\begin{align}
& u^{\prime} = \frac{1}{p \sqrt{\lambda}} \sin (\sqrt{\lambda} r), \notag \\
& u = \frac{1}{p \lambda} ( p \lambda+1-  \cos (\sqrt{\lambda} r)), \notag \\
& \varphi=\varphi_{\lambda} (r)= \frac{ \sqrt{\lambda} \sin (\sqrt{\lambda} r)}{ p
 \lambda+1
-  \cos (\sqrt{\lambda} r))}.  \label{sol varphi lambda positive}
\end{align}
Hence $r_{\lambda,p} = \frac{\pi}{\sqrt{\lambda}}$ is the smallest positive $r$
such that $\varphi_{\lambda} (r) =0$.  The underlying surface is compact.
We have $\varphi_{\lambda}^{\prime}(r_{\lambda,p} ) 
= - \frac{\lambda}{  p \lambda+2}$
which is a monotone decreasing function of $\lambda$.
Given positive integer $p$, then range $\{ \varphi_{\lambda}^{\prime}(r_{\lambda,p} ), \,
\lambda \in (0, \infty) \} = \left  ( - \frac{1}{p}, 0 \right )$.
Hence for any integer $q > p$ we can find $\lambda_{p,q} = \frac{2}{q-p} $ such that 
$ \varphi_{\lambda_{p,q}}^{\prime}(r_{\lambda_{p,q},p} ) =-\frac{1}{q}$.
Hence the metric $dr^2 +\varphi_{\lambda_{p,q}}(r)^2 d \theta^2$ is a 
steady Ricci Yang-Mills solion defined on orbifold $S^2_{p,q}$.
By some further calculations one can show that the extended metric is real
analytic on $S^2_{p,q}$.

\vskip .2cm
In case where $\lambda \in (-2/p, 0), \, \mu =-1$ and $\lambda =2 \left ( \frac{\eta^2}{4} - \frac{1}{p} \right ) \in \left ( -\frac{2}{p}, 0 \right )$ it follows from
  \cite{Wo26}*{Theorem 3.21} that the solitons are complete  and noncompact.
Furthermore a simple calculation  using L'Hospital rule as 
in \cite{Wo26}*{Proposition 3.22}  gives the Gauss curvature
 at point $\{ 0 \}/ \mathbb{Z}_p$
\[
K(\{ 0 \}/ \mathbb{Z}_p ) = -\lim_{r \to 0^+} \frac{ \varphi_{\lambda} 
^{\prime \prime}(r)}{  \varphi_{\lambda}  (r)}= \frac{3}{p} + \lambda.
\] 
Actually solving the ODE as $\lambda >0$ case we have solution
for $\lambda  \in (-2/p, 0)$
\begin{equation}
\varphi (r) = \varphi_{\lambda} (r) =\frac{ \sqrt{|\lambda|} \sinh (\sqrt{|\lambda|} r)}{ p
| \lambda| - 1 +  \cosh (\sqrt{|\lambda|} r))}, 
\qquad r \in [0, \infty).  \label{sol varphi lambda negative}
\end{equation}

\vskip .2cm
In case where $\lambda =0 , \, \mu =  -1$ and $\eta = 2/ \sqrt{p}$ we have
from solving (\ref{eq varphi satisfy 2nd order mu -1})

\begin{equation} \label{sol varphi lambda zero}
\varphi =   \frac{r}{p+\frac{r^2}{2}}.
\end{equation}
The corresponding metric is a cusp soliton.

\vskip .2cm
In case where $\lambda  = -2/p$,  $ \mu =-1$ and $\eta = 0$,
from solving (\ref{eq varphi satisfy 2nd order mu -1})
we get solution
 \[
 \varphi(r) =\sqrt{\frac{2}{p}} \, \tanh \frac{r}{\sqrt{2p}}.
 \]
Let $s =\frac{r}{\sqrt{2p}} $, then metric $g = 2p (ds^2 + \frac{1}{p^2} 
\tanh^2 s d \theta^2 )$. Note that metric $ds^2 + \frac{1}{p^2} 
\tanh^2 s d \theta^2 $ is the quotient of the standard cigar metric on 
$\mathbb{R}^2/ \mathbb{Z}_p$.

\subsection{\texorpdfstring{$\mu =1$}{mu = 1} case} \label{subsec mu =one soliton sol}
In this case  we get from  (\ref{eq varphi satisfy 2nd order})

\begin{equation} \label{eq varphi satisfy 2nd order mu positive 1}
 \varphi^{\prime \prime}  -\frac{3 }{2 } ( \varphi^2)^{\prime}
+  \varphi^3 + \lambda \varphi =0,
\end{equation}
with initial condition  (\ref{eq varphi initial any mu}).
We have  from (\ref{eq lambda any mu})
\[
\lambda = 2 \left ( \frac{ \eta^2}{4} + \frac{1}{p} \right ) \geq \frac{2}{p}.
\]

Similar to solving equation (\ref{eq varphi satisfy 2nd order mu -1})
 we have solution
\[
\varphi(r)= \varphi_{\lambda}(r)
 = \frac{ \sqrt{\lambda} \sin ( \sqrt{\lambda}  r)} { p\lambda -1
+ \cos ( \sqrt{\lambda}  r)}, \qquad  0 \leq r \leq \frac{ \pi}{  \sqrt{\lambda}}.
\]
From 
\[
\varphi_{\lambda}^{\prime}(r) = \frac{ \lambda( 1+ (p \lambda -1) 
\cos ( \sqrt{\lambda}  r) )} 
{ (p\lambda -1 + \cos ( \sqrt{\lambda}  r) )^2}
\]
we get
\[
 \varphi_{\lambda}^{\prime}( \frac{ \pi}{  \sqrt{\lambda}} ) 
 =  \frac{ \lambda }{ 2-p  \lambda}.
\]

For any $1 \leq q <p$ when $\lambda = \frac{2}{p-q}$ we have 
$ \varphi_{\lambda}^{\prime}( \frac{ \pi}{  \sqrt{\lambda}} ) =- \frac{1}{q}$. 
Hence the corresponding
metric is defined on $S_{p,q}^2$.

In the special situation of $\lambda =2/p$ we have
\[
\varphi(r) = \frac{ \sqrt{\lambda} \sin ( \sqrt{\lambda}  r)} { 1
+ \cos ( \sqrt{\lambda}  r)}, \qquad  0 \leq r \leq \frac{ \pi}{  \sqrt{\lambda}}.
\]
We have $\lim_{ r \to\frac{ \pi}{  \sqrt{\lambda}}} \varphi (r  ) = + \infty $.
The corresponding metric is incomplete.

\subsection{\texorpdfstring{$\mu =0$}{mu = 0} case}   \label{subsec mu =0 soliton sol}
In this case we have $f =0$ from (\ref{eq f reduction}) and the soliton is trivial.
We get from  (\ref{eq varphi satisfy 2nd order})
\begin{equation} \label{eq varphi satisfy 2nd order mu zero}
 \varphi^{\prime \prime}   + \lambda \varphi =0, \qquad 
 \varphi(0) =0, \quad \varphi^{\prime}(0) = \frac{1}{p}, 
\end{equation}
where $ \lambda = \frac{\eta^2}{2}$ with $\eta \in [ 0, \infty)$.
If $\eta \in ( 0, \infty)$, 
the solutions are $\varphi(r) = \frac{1} { \frac{1}{\sqrt{2}}\eta p} \sin  \left (
\frac{1}{\sqrt{2}}\eta r \right )$, this is a round sphere
of radius  $ \frac{\sqrt{2}} {\eta }$ quotient by a standard $\mathbb{Z}_p$-action. 
We also have $\varphi^{\prime} (\frac{\sqrt{2} \pi}{\eta} ) =- \frac{1}{p} $.
If $\eta = 0$, then $\lambda = 0$ and $\varphi^{\prime \prime} (r) = 0$,
the solution is $\varphi (r) = r/p$. Hence, this is the flat cone $\mathbb{R}^2/
\mathbb{Z}_p$.

Hence when $\mu =0$ the family of steady solitons in $\lambda$ (in turn 
$\eta \in (0, \infty)$) is asymptotic to the flat cone
$\mathbb{R}^2 / \mathbb{Z}_p$ when $\eta \to 0^+$ and to a round point
 quotient by $\mathbb{Z}_p$-action when $\eta \to \infty$.

\section{The proof of Theorem \ref{thm main of orbifold womack p version}}
\label{subsec pf main thm}

Assume $\mu =-1$, note that from solution $\varphi_{\lambda}$ in \S
\ref{subsec mu -1} the steady soliton $g_{\lambda}
=dr^2 +\varphi_{\lambda}(r)^2 d \theta^2$  are asymptotic in pointed 
Cheeger-Gromov topology to soliton
 $g_{ -2/p}$ when  $\lambda \to (-2/p)^+$ and 
  to solitons at $ g_0$ as $\lambda \to 0^-$ or $\lambda \to 0^+$.

Note that as in \cite{Wo26}*{Prop. 3.15} our soliton $g_{\lambda}$ also
verifies that $\lambda =0$ is a phase transition from 
noncompact  solitons and compact ones.

Finally we consider the asymptotics of the steady solitons $g_{\lambda}$
as $\lambda \to \infty$.
Suppose we have a family of solution $\varphi_i = \varphi_{\lambda_i}$ 
corresponding to $\mu =-1$
and $\lambda_i \to \infty$ we take rescaling with $\alpha_i =\lambda_i$
in \S \ref{subsec rescling alpha},
then the corresponding rescaled $ \tilde{\lambda}_i =1, \,
\tilde{\mu}_i = (\lambda_i)^{-1} \cdot (-1) 
\to 0$, hence the limit of $\tilde{\varphi}_i \to \tilde{\varphi}_{\infty}$ satisfies
equation
$ \frac{d^2}{d \tilde{r}^2}  \tilde{\varphi}_{\infty} +\tilde{\varphi}_{\infty} =0$. 
We have the initial condition (using base point at $r =0$ for each $i$)
$\tilde{\varphi}_{\infty}  (0) =0, \, \tilde{\varphi}_{\infty} =1/p$, hence
the solution is $\tilde{\varphi}_{\infty} (\tilde{r}) = \frac{1}{p} \sin \tilde{r}$,
which corresponds to  the quotient round metric on $S_{p,p}^2$.
Now Theorem \ref{thm main of orbifold womack p version} is proved.

\vskip .2cm
A simple consequence of  Proposition \ref{prop killing vf},
 Theorem \ref{thm main of orbifold womack p version}
  is  the following classification theorem (compare  
to \cite{Wo26}*{Theorem 5.6} and \cite{St10}*{Proposition 17})

\begin{thm} \label{thm uniqueness main of Ricci YM}
 Any complete  steady Ricci Yang-Mills soliton on a surface
 containing nontrivial orifold point $\mathbb{R} / \mathbb{Z}_p$ is isometric to
 up to rescaling (i) a member of the family in Theorem \ref{thm main of orbifold 
 womack p version} when $\mu <0$, (ii) a member in \S 
 \ref{subsec mu =one soliton sol} when $\mu >0$,
 (iii) a member in \S \ref{subsec mu =0 soliton sol} when $\mu =0$,
 or (iv) a flat  orbifold $\mathbb R^2/\Gamma$ for a discrete group $\Gamma$ 
 of Euclidean isometries.
\end{thm}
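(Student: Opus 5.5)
We split according to whether the soliton is trivial. If $f$ is constant, the equations in (\ref{eq steady grad Ricci ym soliton}) with $\hat\lambda=0$ reduce to $\operatorname{Rc}(g)=\frac12 F^2$ and $\operatorname{div}F=0$. In dimension two $F=\psi\,d\mu_g$ with $F^2=\psi^2 g$, so $K=\psi^2/2\ge 0$. Moreover $d\psi=0$ because $F$ is co-closed, so $\psi$ is constant. If $\psi=0$, the metric is complete and flat, and the developing map identifies the orbifold with $\mathbb R^2/\Gamma$ for a discrete group $\Gamma$ of Euclidean isometries; this is case (iv). If $\psi\neq0$, then $K$ is a positive constant, the orbifold is a spherical space form, and the presence of a point $\mathbb R^2/\mathbb Z_p$ forces the underlying orbifold to be a round $\mathbb Z_p$-quotient. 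We would then check that this agrees with the $\mu=0$ family in \S\ref{subsec mu =0 soliton sol}, which is case (iii).

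For the nontrivial case we apply Proposition~\ref{prop killing vf}. It shows that $J(\nabla f)$ is a nontrivial Killing field vanishing at the orbifold point $\{0\}/\mathbb Z_p$. Its flow acts by rotations fixing that point, so on the region $r<b$ (where $b\in(0,\infty]$) we may write $g=dr^2+\varphi^2(r)d\theta^2$ in geodesic polar coordinates about $\{0\}/\mathbb Z_p$. Here $f=f(r)$, and the $\mathbb Z_p$ cone angle gives $\varphi(0)=0$ and $\varphi'(0)=1/p$, which is (\ref{eq varphi initial any mu}). The reduction of \cite{Wo26}*{Lemma 3.1, Corollary 3.3} then gives the system (\ref{eq varphi satisfy 2nd order})--(\ref{eq lambda any mu}) for some constants $\mu,\eta,\lambda$. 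Since $f$ is nonconstant, $\mu\neq0$ by (\ref{eq f reduction}). Rescaling as in \S\ref{subsec rescling alpha} with $\alpha=|\mu|$ normalizes $\mu=\pm1$.

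Next comes uniqueness for the ODE. Equation (\ref{eq varphi satisfy 2nd order}) is a regular second-order ODE with data $\varphi(0)=0$, $\varphi'(0)=1/p$, so for fixed $\lambda$ the solution is unique. It must therefore coincide with the explicit formulas of \S\ref{subsec mu -1} when $\mu=-1$, or of \S\ref{subsec mu =one soliton sol} when $\mu=1$. The admissible range of $\lambda$ is fixed by (\ref{eq lambda any mu}) and $\eta^2\ge0$: we get $\lambda\ge -2/p$ when $\mu=-1$ and $\lambda\ge 2/p$ when $\mu=1$. The remaining fields are then determined: $f$ by (\ref{eq f reduction}) with $f(0)=0$, and $\psi$ (hence $F$) by (\ref{eq psi reduction}). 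This places the soliton in case (i), the family of Theorem~\ref{thm main of orbifold womack p version}, or in case (ii), the family of \S\ref{subsec mu =one soliton sol}.

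We expect the main obstacle to be globalizing the polar-coordinate description and using completeness. The polar coordinates are valid up to the first zero $b$ of $\varphi$, or up to $b=\infty$. If $b<\infty$, then near $r=b$ the metric closes up smoothly or with a cone point of angle $2\pi|\varphi'(b)|$. The orbifold hypothesis requires $|\varphi'(b)|=1/q$ for some integer $q$, and this gives a football $S^2_{p,q}$ or a teardrop. If $b=\infty$, the surface is $\mathbb R^2/\mathbb Z_p$. We also have to discard incomplete members, for instance $\lambda=2/p$ with $\mu=1$, where $\varphi\to\infty$ at finite $r$. Completeness of the metric on the maximal domain excludes these. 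The point needing the most care is that the Killing field acts globally: we would argue that its flow is complete on a complete orbifold, and that its zero set consists of isolated points, which are the two poles or the single pole.
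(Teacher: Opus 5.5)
\textbf{Where this matches the paper.} Your structure is the paper's. You split into trivial and nontrivial solitons. In the trivial case you use $F=c\,d\mu_g$, $K=c^2/2$ and the orbifold space-form theorem. In the nontrivial case you use Proposition~\ref{prop killing vf} followed by the ODE analysis of \S\ref{sec reduction and solutions}. Your nontrivial case is more careful than the paper's one-line ``combining'', and those steps are sound:
\begin{itemize}
\item uniqueness from $\varphi(0)=0$, $\varphi'(0)=1/p$;
\item the range of $\lambda$ from $\eta^2\ge 0$;
\item the condition $|\varphi'(b)|=1/q$ at the far pole;
\item ruling out the incomplete member $\mu=1$, $\lambda=2/p$;
\item globalizing the polar coordinates.
\end{itemize}

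\textbf{The step that fails.} It is in the trivial case with $c\neq 0$. You claim that a point of type $\mathbb{R}^2/\mathbb{Z}_p$ forces a spherical orbifold to be the round $\mathbb{Z}_p$-quotient. This is false. For the dihedral group $D_p\subset SO(3)$, the quotient $S^2/D_p=S^2(2,2,p)$ has one cone point of order $p$ and two of order $2$. For $p=2,3,4,5$, the tetrahedral, octahedral and icosahedral quotients $S^2(2,3,3)$, $S^2(2,3,4)$, $S^2(2,3,5)$ also contain $\mathbb{Z}_p$ points. Each of these is a complete orientable orbifold of constant curvature $c^2/2$. So with $f=0$ and $F=c\,d\mu_g$ each is a trivial steady soliton. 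None has a continuous isometry group, so none lies in the family of \S\ref{subsec mu =0 soliton sol}, which contains only $S^2/\mathbb{Z}_p=S^2_{p,p}$ and the flat cone. No ``check'' can rescue this step.

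\textbf{The fix.} The paper's proof does not attempt this sharpening. It stops at $S^2_{\sqrt{2}/|c|}/\Gamma$ for a finite $\Gamma\subset SO(3)$, which is what the space-form theorem gives, and labels that case (iii). You should stop at the same point. You should also say explicitly that (iii) must be read as ``a spherical space form $S^2/\Gamma$''. The family of \S\ref{subsec mu =0 soliton sol} covers only the cyclic case $\Gamma=\mathbb{Z}_p$, so read literally, item (iii) of the statement is too narrow.
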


\begin{proof} 
If the solition is nontrivial, by combining Proposition \ref{prop killing vf},
 Theorem \ref{thm main of orbifold womack p version} and the analysis in  \S 
 \ref{subsec mu =one soliton sol}  and  \S \ref{subsec mu =0 soliton sol} 
 We get (i) and (ii).
 
 \vskip .1cm
 If the soliton is trivial, the steady soliton equation (\ref{eq steady grad Ricci ym soliton})
 becomes $\operatorname{div}F=0$ and $ \operatorname{Rc}=\frac{1}{2}F^2$,
 hence $F =c d \mu_g, \operatorname{Rc}=\frac{c^2}{2} g$, and the Gauss
 curvature $K=\frac{c^2}{2} $  for some constant $c$. 
 The space-form theorem for complete orbifolds of  constant-curvature 
 (\cite[Chapter 13]{Ra06}) gives:
 
		\begin{enumerate}
\item If $c =0$,  then $K=0$ and $M$ is isometric, as an orbifold, to
$\mathbb R^2/\Gamma$ for a discrete group $\Gamma$ of Euclidean isometries, 
which is not necessarily rotationally symmetric. This corresponds to (iv).
			
\item If $c \neq 0$, then $K=c^2/2 >0$ and $M$ is isometric,
			as an orbifold, to $S^2_{\sqrt{2}/|c| }/\Gamma$ for a finite subgroup
			$\Gamma\subset SO(3)$. Here $S^2_{\sqrt{2}/|c|}$ is the round sphere of
			radius $\sqrt{2}/|c|$. This corresponds to (iii).
		\end{enumerate}
\end{proof}

\bibliographystyle{natbib}

\end{document}